\newtheoremstyle{theorem}
  {10pt}		  
  {10pt}  
  {\sl}  
  {\parindent}     
  {\bf}  
  {. }    
  { }    
  {}     
\theoremstyle{theorem}
\newtheorem{theorem}{Theorem}
\newtheorem{lemma}{Lemma}
\newtheoremstyle{defi}
  {10pt}		  
  {10pt}  
  {\rm}  
  {\parindent}     
  {\bf}  
  {. }    
  { }    
  {}     
\theoremstyle{defi}
\newtheorem{definition}{Definition}
\newtheorem{example}{Example}
\begin{document}

\author[1]{\textbf{B{\"u}lent Altunkaya}\footnote{Correspondence: bulent.altunkaya@ahievran.edu.tr}}
\author[1]{\textbf{Ferda\u{g} Kahraman Aksoyak}}
\author[2]{\textbf{Levent Kula}}
\author[1]{\textbf{Cahit Aytekin}}
\affil[1]{Department of Mathematics, Faculty of  Education, University of Ahi Evran}
\affil[2]{Department of Mathematics, Faculty of  Arts and Sciences, University of Ahi Evran}

\title{\textbf{On rectifying slant helices in Euclidean 3-space} \footnotetext {This research has been supported by Ahi Evran University: PYO-EGF.4001.15.001}}
\date{}
\maketitle
\begin{abstract}
In this paper, we study the position vector of rectifying slant helices in $E^3$. First, we have found the general equations of the curvature and the torsion of rectifying slant helices. After that, we have constructed a second order linear differential equation and by solving the equation, we have obtained a family of rectifying slant helices which lie on cones.
\end{abstract}

{\bf Key Words:} Rectifying Curve, Curvature, Torsion,  Slant Helix, Cone.

{\bf 2010 AMS Subject Classification :} 53A04, 53A05
\section{Introduction}
\label{sec:intro}

In classical differential geometry; a general helix in the Euclidean 3-space, is a curve which makes a constant angle with a fixed direction.

The notion of rectifying curve has been introduced by Chen \cite{chen1,chen2}. Chen showed, under which conditions, the position vector of a unit speed curve lies in its rectifying plane. He also stated the importance of rectifying curves in Physics.

On the other hand, the notion of slant helix was introduced by Izuyama and Takeuchi \cite{izu,izu2}. They showed, under which conditions, a unit speed curve is a slant helix. Later, Ahmet T. Ali published a paper in which position vectors of some slant helices were shown \cite{ali}. In \cite{kula1,kula2}, L. Kula, et al studied the spherical images under both tangent and binormal indicatrices of slant helices and obtained that the spherical images of a slant helix  are spherical helices. 

The papers mentioned above led us to study on the notion of rectifying slant helices. We began with finding the equations of curvature and torsion of a rectifying slant helix. After that, we constructed a second order linear differential equation to determine position vector of a rectifying slant helix. By solving this equation for some special cases, we obtained a unit speed family of rectifying slant helices which lie on cones. 

\section{Preliminaries}
\label{sec:intro}

The Euclidean 3-space $E^3$ is the real vector space $R^3$ with the metric
\begin{equation*}
g=d{ x }_{ 1 }^{ 2 }+d{ x }_{ 2 }^{ 2 }+d{ x }_{ 3 }^{ 2 },
\end{equation*}
where $(x_1,x_2,x_3)$ is a rectangular coordinate system of $E^3$.

A curve $\alpha :I\subset R\longrightarrow { E}^{ 3 }$ is said to be parametrized by the arclength parameter s, if $g\left( { \alpha  }^{ ' }\left( s \right) ,{ \alpha  }^{ ' }\left( s \right)  \right) =1$, where ${ \alpha  }^{ \prime  }\left( s \right)=d\alpha/ds$. Then, we  call $\alpha$ unit speed.  Consider unit-speed space curve $\alpha$ has at least four continuous derivatives, then $\alpha$ has a natural frame called Frenet Frame  with the equations below,

\begin{equation*}
\begin{matrix} \begin{matrix} { t }^{ ' }=\kappa n \\ { n }^{ ' }=-\kappa t+\tau b \\ { b }^{ ' }=-\tau n, \end{matrix} \end{matrix}
\end{equation*}
where $\kappa$ is the curvature, $\tau$ is the torsion, and $\left\{ t,n,b \right\}$ is the Frenet Frame of the curve $\alpha$. We denote unit tangent vector field with $t$, unit principal normal vector field with n, and the unit binormal vector field with $b$. It is possible in general, that ${ t }^{ ' }(s)=0$ for some $s\in I$; however, we assume that this never happens.

\begin{definition}
A curve is called a slant helix if its principal normal vector field makes a constant angle with a fixed line in space.
\end{definition}

\begin{theorem}
A unit speed curve $\alpha$ is a slant helix if and only if the geodesic curvature of the spherical image of the principal normal indicatrix of $\alpha$ which is
\begin{equation*}
\sigma(s)=\left( \frac { { \kappa  }^{ 2 } }{ { { \left( { \kappa  }^{ 2 }+{ \tau  }^{ 2 } \right)  } }^{ { 3 }/{ 2 } } } { \left( \frac { \tau  }{ \kappa  }  \right)  }^{ ' } \right) \left( s \right)
\end{equation*}
is constant \cite{izu,izu2}.
\end{theorem}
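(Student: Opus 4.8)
The plan is to recognize the quantity $\sigma(s)$ as the geodesic curvature of the spherical curve $\gamma(s)=n(s)$, the principal normal indicatrix, and then to translate the slant-helix condition into a statement about that spherical curve. First I would record the formula for the geodesic curvature of a curve lying on the unit sphere $S^{2}$: for any regular parameter $s$,
\begin{equation*}
\kappa_{g}=\frac{\det\left(\gamma,\gamma',\gamma''\right)}{\left|\gamma'\right|^{3}},
\end{equation*}
which is parametrization-independent because the numerator and $|\gamma'|^{3}$ scale the same way under reparametrization (the extra $\gamma'$-term appearing in $\gamma''$ is parallel to $\gamma'$ and so drops out of the determinant).

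Next I would substitute $\gamma=n$ and use the Frenet equations. Differentiating gives $\gamma'=n'=-\kappa t+\tau b$ and $\gamma''=n''=-\kappa' t-(\kappa^{2}+\tau^{2})n+\tau' b$. Reading off coordinates in the positively oriented orthonormal frame $\{t,n,b\}$, the scalar triple product collapses to $\kappa\tau'-\kappa'\tau$, while $|\gamma'|^{3}=(\kappa^{2}+\tau^{2})^{3/2}$. Hence
\begin{equation*}
\kappa_{g}=\frac{\kappa\tau'-\kappa'\tau}{(\kappa^{2}+\tau^{2})^{3/2}},
\end{equation*}
and since $(\tau/\kappa)'=(\kappa\tau'-\kappa'\tau)/\kappa^{2}$ this is exactly $\sigma(s)$. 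So the claim reduces to: $\gamma$ has constant geodesic curvature if and only if $\alpha$ is a slant helix.

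For the forward implication I would use the definition directly. If $\alpha$ is a slant helix there is a fixed unit vector $U$ with $\langle n,U\rangle=\cos\theta$ constant. Writing $U=a\,t+\cos\theta\,n+c\,b$ and imposing $U'=0$ through the Frenet equations yields $a'=\kappa\cos\theta$, $c'=-\tau\cos\theta$, the algebraic relation $a\kappa=c\tau$, and $a^{2}+c^{2}=\sin^{2}\theta$. Substituting a parametrization $a=\sin\theta\cos\phi$, $c=\sin\theta\sin\phi$ consistent with these relations and simplifying shows $\sigma\equiv\pm\cot\theta$, a constant. Geometrically this just records that $\gamma=n$ lies on the circle $S^{2}\cap\{x:\langle x,U\rangle=\cos\theta\}$, whose angular radius $\theta$ from the pole $U$ forces its geodesic curvature to equal $\cot\theta$.

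The converse is the part I expect to be the main obstacle. Assuming $\sigma$ is constant, I must produce a genuinely fixed unit vector $U$ with $\langle n,U\rangle$ constant. The cleanest route is the classification of curves of constant geodesic curvature on $S^{2}$: such a curve is an arc of a circle, cut out by a plane with some unit normal $U$, and every point $x$ of that circle satisfies $\langle x,U\rangle=\text{const}$; applied to $\gamma=n$ this gives the slant-helix condition. If instead one wants a self-contained argument, one reverses the computation above by integrating $a'=\kappa\cos\theta$ and $c'=-\tau\cos\theta$ (with $\cos\theta$ read off from the constant value of $\sigma$) and checking that $U=a\,t+\cos\theta\,n+c\,b$ satisfies $U'=0$. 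The care needed there is in verifying that the algebraic constraint $a\kappa=c\tau$ is preserved along the curve and in excluding the degenerate locus where $\kappa^{2}+\tau^{2}=0$.
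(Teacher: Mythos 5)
The paper does not actually prove this statement: it appears in the Preliminaries as Theorem 1, quoted without proof from Izumiya--Takeuchi \cite{izu,izu2}, so there is no in-paper argument to compare against. Your proposal is correct and is essentially the standard proof from those references: identifying $\sigma$ with the geodesic curvature of the normal indicatrix via $\det(\gamma,\gamma',\gamma'')/|\gamma'|^{3}$, and characterizing slant helices by the indicatrix being a circle on $S^{2}$. On the one point where you hedge --- preserving $a\kappa=c\tau$ in the self-contained converse --- there is no difficulty: setting $\cot\theta=\sigma$ and \emph{defining} $a=\sin\theta\,\tau/\sqrt{\kappa^{2}+\tau^{2}}$, $c=\sin\theta\,\kappa/\sqrt{\kappa^{2}+\tau^{2}}$ makes $a\kappa=c\tau$ hold identically, and the remaining equations $a'=\kappa\cos\theta$, $c'=-\tau\cos\theta$ reduce exactly to $\sigma=\cot\theta$, so $U=a\,t+\cos\theta\,n+c\,b$ is constant; the nondegeneracy $\kappa^{2}+\tau^{2}\neq0$ is guaranteed by the paper's standing assumption $\kappa>0$.
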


 A unit speed curve $\alpha$ is called rectifying curve when the position vector of it always lie in its rectifying plane. So, for a rectifying curve we can write

\begin{equation*}
\alpha \left( s \right) =\lambda \left( s \right) t\left( s \right) +\mu \left( s \right) b\left( s \right).
\end{equation*}

\begin{theorem}
A unit speed curve $\alpha$ is congruent to a rectifying curve if and only if
\begin{equation*}
\frac { \tau(s)  }{ \kappa (s) } ={ c }_{ 1 }s+{ c }_{ 2 }
\end{equation*}
for some constants $c_1$  and $c_2$, with $c_1\neq0$ \cite{chen1,chen2}.
\end{theorem}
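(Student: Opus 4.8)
The plan is to prove both implications by exploiting the Frenet equations together with the fact that for a unit speed curve $\alpha'(s)=t(s)$. For the forward direction, I would start from the defining relation of a rectifying curve, $\alpha(s)=\lambda(s)t(s)+\mu(s)b(s)$, and differentiate once. Substituting $t'=\kappa n$ and $b'=-\tau n$ and collecting the $t$, $n$, $b$ components gives
\begin{equation*}
t=\lambda' t+\left(\lambda\kappa-\mu\tau\right)n+\mu' b.
\end{equation*}
Matching coefficients in the orthonormal Frenet frame forces $\lambda'=1$, $\mu'=0$, and $\lambda\kappa-\mu\tau=0$. Hence $\lambda(s)=s+a$ and $\mu$ is a nonzero constant (it cannot vanish, else $\kappa\equiv0$ and $\alpha$ reduces to a line), and the third relation rearranges to $\tau/\kappa=\lambda/\mu=(s+a)/\mu$, which is exactly an affine function $c_1 s+c_2$ with $c_1=1/\mu\neq0$.

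For the converse, the key idea is to construct the position vector explicitly and then verify it by a single differentiation. Assuming $\tau/\kappa=c_1 s+c_2$ with $c_1\neq0$, I would set the constant $\mu=1/c_1$ and the function $\lambda(s)=(c_1 s+c_2)/c_1$, so that by construction $\lambda'=1$, $\mu'=0$, and $\lambda\kappa-\mu\tau=\kappa\left(\lambda-\mu\,\tau/\kappa\right)=0$. I would then define the auxiliary vector field $\beta(s)=\lambda(s)t(s)+\mu(s)b(s)$ and differentiate it using the Frenet equations; the $n$ components cancel precisely because $\lambda\kappa=\mu\tau$, leaving $\beta'(s)=t(s)=\alpha'(s)$.

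Since $\beta'=\alpha'$, the difference $\beta-\alpha$ is a constant vector $v_0$, so that $\alpha(s)=\lambda(s)t(s)+\mu(s)b(s)-v_0$. Translating $\alpha$ by $v_0$ places the position vector of the translated curve inside the rectifying plane at every point, which is exactly the assertion that $\alpha$ is congruent to a rectifying curve. I expect the forward direction to be essentially routine once the coefficients are matched; the only real obstacle, and the step deserving care, is the converse, where one must have the foresight to build $\beta$ from the prescribed $\lambda$ and $\mu$ and to recognize that it is the constancy of $\beta-\alpha$, rather than of $\beta$ itself, that yields congruence. The hypothesis $c_1\neq0$ is used essentially here, both to make $\mu=1/c_1$ well defined and to guarantee $\mu\neq0$, so that the curve genuinely lies in its rectifying plane rather than degenerating to a line.
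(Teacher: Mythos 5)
Your argument is correct and complete. Note, however, that the paper itself offers no proof of this statement: it is quoted in the Preliminaries as a known result of Chen, with the proof deferred to the cited references. So there is no ``paper's approach'' to compare against; what you have written is essentially Chen's original argument, and it is the standard one. The forward direction correctly differentiates $\alpha=\lambda t+\mu b$, uses $\alpha'=t$ and the Frenet equations to extract $\lambda'=1$, $\mu'=0$, $\lambda\kappa-\mu\tau=0$, and the only point worth tightening is your aside that $\mu=0$ would force ``$\kappa\equiv 0$'': more precisely, $\mu=0$ together with $\lambda\kappa=\mu\tau=0$ and $\lambda(s)=s+a$ forces $\kappa=0$ away from the single zero of $\lambda$, which contradicts the standing assumption $t'\neq 0$ made in the paper, so the conclusion $\mu\neq 0$ stands. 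The converse is handled exactly as it should be: building $\beta=\lambda t+\mu b$ from the prescribed $\lambda$ and $\mu$, observing $\beta'=t=\alpha'$, and concluding that $\alpha$ differs from a curve lying in its rectifying plane by a translation, which is a congruence. This is a sound, self-contained proof of the cited theorem.
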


\section{Rectifying Slant Helices in $E^3$}
\label{sec:intro}

If the position vector of a unit speed slant helix always lies in its rectifying plane we  call it a rectifying slant helix. For a rectifying slant helix we  have the following theorem.

\begin{theorem}
Let $\alpha$ be a unit speed curve in $E^3$. Then, $\alpha(s)$ is a rectifying slant helix if and only if the curvature and torsion of the curve satisfies the equations below;
\begin{equation}
\kappa (s)=\frac { { c }_{ 3 }  }{ { { \left( 1+{ \left( { c }_{ 1 }s+{ c }_{ 2 } \right)  }^{ 2 } \right)  } }^{ { 3 }/{ 2 } } } ,\tau (s)=\frac { { c }_{ 3 }  \left( { c }_{ 1 }s+{ c }_{ 2 } \right)  }{ { { \left( 1+{ \left( { c }_{ 1 }s+{ c }_{ 2 } \right)  }^{ 2 } \right)  } }^{ { 3 }/{ 2 } } },
\end{equation}
where $c_1 \neq 0,c_2\in R$, $\theta \neq 0+k \pi/2, k\in Z$, and $c_3 \in R^+.$
\end{theorem}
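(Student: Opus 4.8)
The plan is to read off the system for $\kappa$ and $\tau$ by imposing \emph{both} characterizations stated above simultaneously: Chen's criterion (the rectifying condition) and the Izumiya--Takeuchi criterion (the slant-helix condition). Since the two conditions are each already phrased as an ``if and only if'', combining them should give an ``if and only if'' for the rectifying slant helix, so I would prove both directions.

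For the forward direction, I would first apply Chen's theorem: because $\alpha$ is a rectifying curve, $\tau/\kappa = c_1 s + c_2$ with $c_1 \neq 0$. Hence $\tau = \kappa(c_1 s + c_2)$ and, differentiating, $(\tau/\kappa)' = c_1$. Substituting into the slant-helix quantity and using $\kappa^2 + \tau^2 = \kappa^2\bigl(1 + (c_1 s + c_2)^2\bigr)$ together with $\kappa > 0$ (guaranteed by $t' \neq 0$), I obtain
\[
\sigma = \frac{\kappa^2}{(\kappa^2 + \tau^2)^{3/2}}\left(\frac{\tau}{\kappa}\right)' = \frac{c_1}{\kappa\,\bigl(1 + (c_1 s + c_2)^2\bigr)^{3/2}}.
\]
By the slant-helix theorem the left side is a constant; setting $c_3 := c_1/\sigma$ and solving for $\kappa$ yields the first formula, while $\tau = \kappa(c_1 s + c_2)$ yields the second. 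Because $\kappa > 0$, the constant $c_3$ must be positive.

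For the converse I would simply reverse these steps: starting from the two formulas, compute $\tau/\kappa = c_1 s + c_2$, which is affine in $s$ with $c_1 \neq 0$, so Chen's theorem makes $\alpha$ (congruent to) a rectifying curve; then substitute the formulas back into $\sigma$ to recover the constant value $c_1/c_3$, so the Izumiya--Takeuchi theorem makes $\alpha$ a slant helix. The two facts together give a rectifying slant helix.

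The algebra here is elementary and I expect no genuine obstacle in the main chain of equalities; the care lies entirely in the bookkeeping of constants and signs. Concretely, I would need to justify that $\sigma$ is a finite, nonzero constant so that $c_3 = c_1/\sigma$ is a well-defined element of $R^+$: this is exactly where the hypothesis $\theta \neq k\pi/2$ enters, since the constant angle $\theta$ that the principal normal makes with the fixed axis controls $\sigma$ (the geodesic curvature of the principal-normal spherical image), and the excluded values $\theta = 0, \pi/2, \dots$ would force $\sigma$ to be infinite or zero and thus conflict with $c_1 \neq 0$ and $c_3 \in R^+$.
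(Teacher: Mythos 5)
Your proposal is correct and follows essentially the same route as the paper: combine Chen's criterion $\tau/\kappa = c_1 s + c_2$ with the Izumiya--Takeuchi invariant $\sigma$ to solve for $\kappa$, then recover $\tau$, and reverse the substitutions for the converse. Your bookkeeping of the constant is in fact slightly more careful than the paper's (which simply sets $c_3 = \left| c_1/m \right|$), and your remark on why $\theta \neq k\pi/2$ is needed is a useful addition the paper leaves implicit.
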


\begin{proof}
Let $\alpha$ be a unit speed rectifying slant helix in $E^3$, then the equations in Theorem 1, and Theorem 2 exists. If we combine them then we  have
\begin{equation*}
m =\frac { { c }_{ 1 } }{ { { \kappa \left( 1+{ \left( { c }_{ 1 }s+{ c }_{ 2 } \right)  }^{ 2 } \right)  } }^{ { 3 }/{ 2 } } }.\\
\end{equation*}
where $m$ is a constant. So we can write $\kappa$ as follows

\begin{equation*}
\kappa(s) =\frac { { c }_{ 3 }   }  { { {\left( 1+{ \left( { c }_{ 1 }s+{ c }_{ 2 } \right)  }^{ 2 } \right)  } } ^{ { 3 }/{ 2 } } },
\end{equation*}
then, from Theorem 2

\begin{equation*}
\tau (s)=\frac { { c }_{ 3 } \left( { c }_{ 1 }s+{ c }_{ 2 } \right)  }{ { { \left( 1+{ \left( { c }_{ 1 }s+{ c }_{ 2 } \right)  }^{ 2 } \right)  } }^{ { 3 }/{ 2 } } }.
\end{equation*}
where $c_3=\left|  c_1/m \right|$.

Conversely, it can be easily seen that,  the curvature functions as mentioned above satisfy the equations at Theorem 1 and Theorem 2. So, $\alpha$ is a rectifying slant helix.

\end{proof}

Now, we  give another Theorem by using the definitions of slant helix and rectifying curve to determine $c_3$.
\begin{theorem}
	Let $\alpha$ be a unit speed rectifying slant helix whose principal  normal vector field makes a constant angle with a unit vector $u$, then the curvature and torsion of $\alpha$ satisfy the equations below;
	\begin{equation*}
	\kappa (s)=\frac {\left| c_{ 1 } \tan(\theta)   \right| }{ { { \left( { \left( { c }_{ 1 }s+{ c }_{ 2 } \right)  }^{ 2 }+1 \right)  } }^{ { 3 }/{ 2 } } } ,\quad \tau (s)=\frac { \left| c_{ 1 } \tan(\theta)   \right|  \left( { c }_{ 1 }s+{ c }_{ 2 } \right)  }{ { { \left( { \left( { c }_{ 1 }s+{ c }_{ 2 } \right)  }^{ 2 }+1 \right)  } }^{ { 3 }/{ 2 } } } 
	\end{equation*}
	where $c_1\neq 0,c_2\in R.$
\end{theorem}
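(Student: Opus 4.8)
The statement only refines Theorem 3: the expressions for $\kappa$ and $\tau$ already have the required shape, so the entire content is the identification $c_3=\left|c_1\tan\theta\right|$. Theorem 3 fixed $c_3$ merely as a free positive constant because it used the slant-helix hypothesis only qualitatively (that $\sigma$ is constant); here I would extract the numerical value of $c_3$ by exploiting the constant-angle hypothesis quantitatively. The plan is therefore to decompose the fixed unit vector $u$ in the Frenet frame and read off $c_3$ from the normalization $g(u,u)=1$.

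Concretely, I would write $u=a(s)\,t+\cos\theta\,n+c(s)\,b$, where $g(n,u)=\cos\theta$ is the constant angle and $a=g(t,u)$, $c=g(b,u)$. Differentiating each of these inner products along $\alpha$ and using the Frenet equations together with $u'=0$ gives three relations: from $g(n,u)'=0$ one gets $-\kappa a+\tau c=0$, while differentiating $g(t,u)=a$ and $g(b,u)=c$ yields $a'=\kappa\cos\theta$ and $c'=-\tau\cos\theta$. Using $\tau/\kappa=c_1 s+c_2$ from Theorem 2, the first relation gives $a=(c_1 s+c_2)\,c$; substituting this into $a'=\kappa\cos\theta$, applying the product rule, and replacing $\tau$ by $(c_1 s+c_2)\kappa$ solves for $c$ explicitly, namely $c=\kappa\cos\theta\,(1+(c_1 s+c_2)^2)/c_1$. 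Plugging in the Theorem 3 formula for $\kappa$ then collapses the powers of $1+(c_1 s+c_2)^2$, so that both $a$ and $c$ become simple multiples of $(1+(c_1 s+c_2)^2)^{-1/2}$.

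The final step is the normalization $a^2+c^2=\sin^2\theta$, which is exactly $g(u,u)=1$ since the $n$-component contributes $\cos^2\theta$. Substituting the explicit $a$ and $c$, the bracket $(c_1 s+c_2)^2+1$ cancels against the denominator completely, leaving the $s$-independent identity $c_3^2\cos^2\theta/c_1^2=\sin^2\theta$; the nondegeneracy $\theta\neq k\pi/2$ guarantees $\cos\theta\neq 0$ and $\sin\theta\neq 0$, so this rearranges to $c_3^2=c_1^2\tan^2\theta$, whence $c_3=\left|c_1\tan\theta\right|$ upon taking the positive root forced by $c_3\in R^+$. Feeding this value back into Theorem 3 reproduces exactly the asserted formulas for $\kappa$ and $\tau$.

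The main obstacle I anticipate is bookkeeping rather than conceptual: one must differentiate $a=(c_1 s+c_2)c$ correctly, retain the product-rule term $c_1 c$, and verify that after substitution the $s$-dependence genuinely cancels in $a^2+c^2$. That cancellation is simultaneously the consistency check, confirming that $u$ can in fact be a constant vector, and the mechanism that pins down $c_3$; were the sum $a^2+c^2$ to retain any $s$-dependence, no fixed unit vector of constant angle could exist, so the clean cancellation is the crux of the argument.
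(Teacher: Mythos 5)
Your proposal is correct and follows essentially the same route as the paper: both decompose the fixed vector $u$ in the Frenet frame, differentiate $g(n,u)=\cos\theta$ and the resulting relation $g(t,u)=(c_1s+c_2)\,g(b,u)$ once more, and close with the unit-norm condition $\lambda_1^2+\lambda_2^2+\lambda_3^2=1$; you merely apply the normalization last (to pin down $c_3$) where the paper applies it first (to get $\lambda_1,\lambda_3$ explicitly and then solve for $\kappa$ directly). The computation checks out, including the product-rule term $c_1c$ and the cancellation of the $s$-dependence in $a^2+c^2$.
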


\begin{proof}
Let $\alpha$ be a unit speed rectifying slant helix in $E^3$. Then, from the definition of slant helix there is a unit fixed vector $u$  with

	\begin{equation*}
	g(n,u)=\cos(\theta),
	\end{equation*}	
	where $\theta \in R^+$. If we differentiate this equation with respect to $s$, we  have,
	
	\begin{equation*}
	g(-\kappa t+ \tau b,u)  =0.
	\end{equation*}
	If we divide both parts of the equation with $\kappa$, we get
	
	\begin{equation}
	g(-t+ (c_1 s+c_2) b,u)  =0,
	\end{equation}	
	then,
	
	\begin{equation*}
	g(t ,u)=(c_1 s+c_2)g(b,u).
	\end{equation*}
	While $\{t,n,b\}$ is a orthonormal frame we can write,
	
	\begin{equation*}
	v= \lambda_1 t+ \lambda_2 n+ \lambda_3 b,
	\end{equation*}
	with $\lambda_1^2+\lambda_2^2+\lambda_3^2=+1.$ If we make the neccessary calculations we  have,
	\begin{equation*}
	\lambda_1=\mp \frac{ (c_1 s+c_2) \sin  (\theta)  }{ \sqrt{(c_1 s+c_2)^2+1} },\quad    \lambda_2= \cos (\theta) ,\quad  \lambda_3=\pm \frac{ \sin  (\theta)  }{ \sqrt{(c_1 s+c_2)^2+1} }.
	\end{equation*}
	By differentiating (2) we have,
	
	\begin{equation*}
	\pm \frac{ c_1 \sin  (\theta)  }{ \kappa \sqrt{(c_1 s+c_2)^2+1} }-(1+(c_1 s+c_2)^2)\cos(\theta) =0.
	\end{equation*}
	Therefore,
	\begin{equation*}
	\kappa (s)=\frac { \left| c_{ 1 } \tan(\theta)   \right| }{ { { \left( { \left( { c }_{ 1 }s+{ c }_{ 2 } \right)  }^{ 2 }+1 \right)  } }^{ { 3 }/{ 2 } } },
	\end{equation*}
	and
	\begin{equation*}
	\tau (s)=\frac { \left| c_{ 1 } \tan(\theta)   \right|  \left( { c }_{ 1 }s+{ c }_{ 2 } \right)  }{ { { \left( { \left( { c }_{ 1 }s+{ c }_{ 2 } \right)  }^{ 2 }+1 \right)  } }^{ { 3 }/{ 2 } } } .
	\end{equation*}

\end{proof}

\begin{theorem}
     Let $\alpha(s)$ be a unit speed rectifying slant helix. Then, the vector $v$ satisfies the linear vector differential equation of second order as follows;

\begin{equation*}
    v''(s)+\frac { ({ c }_{ 1 } \tan { (\theta)   })^2  }{ { {\left( 1+{ \left( { c }_{ 1 }s+{ c }_{ 2 } \right)  }^{ 2 } \right)  } } ^2 }v(s)=0,
\end{equation*}
where $v=\frac{n'}{\kappa}$.

\begin{proof}
	Let $\alpha$ be a unit speed rectifying slant helix then we can write frenet equations as follows,
	\begin{equation}
	\begin{matrix} \begin{matrix} { t }^{ ' }=\kappa n \\ { n }^{ ' }=-\kappa t+f \kappa b \\ { b }^{ ' }=-f \kappa n, \end{matrix} \end{matrix}
	\end{equation}
	where $f(s)=c_1 s+c_2$. If we divide second equation by $\kappa$ we  have,
		\begin{equation}
	    \frac{n'}{\kappa}=- t+f b.
		\end{equation}
	By differentiating (4), we have
	\begin{equation}
	c_1 b=\left( \frac{n'}{\kappa}\right) '+\kappa (1+f^2)n.
	\end{equation}
	By differentiating (5) and using (3) we  have
	   	\begin{equation}
		\left( \frac{n'}{\kappa}\right) ''+\kappa (1+f^2)n'+\left[ \left( \kappa (1+f^2)\right )'+c_1 f \kappa \right] n=0,
		\end{equation}
		with the necessary calculations we  easily see
		\begin{equation*}
		\left( \kappa (1+f^2)\right )'+c_1 f \kappa =0.
		\end{equation*}
		So we  have (6) as follows,
			\begin{equation}
			\left( \frac{n'}{\kappa}\right) ''+\kappa (1+f^2)n'=0.
			\end{equation}
			Let us denote $\frac{n'}{\kappa}=v$. Then (7) becomes to
			\begin{equation}
			 v''+\frac { ({ c }_{ 1 } \tan { (\theta)   })^2  }{ { {\left( 1+{ \left( { c }_{ 1 }s+{ c }_{ 2 } \right)  }^{ 2 } \right)  } } ^2 }v=0,
			\end{equation}
			this completes the proof.
\end{proof}
\end{theorem}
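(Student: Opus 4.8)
The plan is to work entirely inside the Frenet apparatus, exploiting two facts: the defining relation of a rectifying slant helix, namely $\tau/\kappa=f(s)$ with $f(s)=c_1 s+c_2$, and the explicit curvature formula from Theorem 4. The overall strategy is to differentiate the auxiliary vector $v=n'/\kappa$ twice, organize the result in the Frenet basis, and then show that every term proportional to $n$ cancels. What remains can be closed up by feeding $n'=\kappa v$ back into the equation, which converts a relation between $v''$ and $n'$ into the desired second-order equation in $v$ alone.

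First I would rewrite the Frenet equations with $\tau=f\kappa$, so that dividing $n'=-\kappa t+\tau b$ by $\kappa$ gives the compact expression $v=n'/\kappa=-t+fb$. This is the form that makes the differentiation tractable. Differentiating once and using $t'=\kappa n$, $b'=-f\kappa n$, and $f'=c_1$, the tangential and binormal contributions recombine to give $v'=c_1 b-\kappa(1+f^2)n$, equivalently $c_1 b=v'+\kappa(1+f^2)n$. Differentiating a second time and again substituting $b'=-f\kappa n$ yields
\[
v''+\kappa(1+f^2)n'+\big[(\kappa(1+f^2))'+c_1 f\kappa\big]n=0.
\]

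The main obstacle, and really the crux of the argument, is showing that the bracketed coefficient of $n$ vanishes identically. This is exactly where Theorem 4 is needed. Writing $K=|c_1\tan(\theta)|$, the curvature formula gives $\kappa(1+f^2)=K(1+f^2)^{-1/2}$, whose derivative is $(\kappa(1+f^2))'=-K c_1 f(1+f^2)^{-3/2}=-c_1 f\kappa$. Hence the bracket equals $-c_1 f\kappa+c_1 f\kappa=0$. I would flag this as the only step that is not purely formal: it is precisely the specific $(1+f^2)^{-3/2}$ shape of $\kappa$ that forces the cancellation, so the identity would be false for an arbitrary rectifying curve lacking the slant-helix curvature.

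With that term removed, the surviving relation is $v''=-\kappa(1+f^2)n'$. Finally I would substitute $n'=\kappa v$ and simplify the coefficient using $\kappa^2(1+f^2)=K^2/(1+f^2)^2=(c_1\tan(\theta))^2/(1+f^2)^2$, which delivers
\[
v''+\frac{(c_1\tan(\theta))^2}{\left(1+(c_1 s+c_2)^2\right)^2}\,v=0,
\]
as claimed. Everything past the cancellation is routine bookkeeping; the one place to be careful is verifying the sign and the exponent arithmetic when differentiating $(1+f^2)^{-1/2}$, since an error there would spoil the exact cancellation.
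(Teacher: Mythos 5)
Your proposal is correct and follows essentially the same route as the paper: divide $n'=-\kappa t+\tau b$ by $\kappa$ to get $v=-t+fb$, differentiate twice, cancel the coefficient of $n$, and substitute $n'=\kappa v$. You additionally carry out the cancellation $(\kappa(1+f^2))'=-c_1 f\kappa$ explicitly using the curvature formula of Theorem 4, a detail the paper dismisses as ``necessary calculations,'' and your computation there is accurate.
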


As we know every component of vector $v=(v_1,v_2,v_3)$ must satisfy (8). We can show

\begin{equation*}
\begin{matrix} \begin{matrix}
&&v_1(s)=-\sqrt{\Bigl( 1+{ f }^{ 2 }\left( s\right)\Bigl)}\sin\left[\sec (\theta)  \arctan\left[f(s)\right]  \right],\\
&&v_2(s)=\sqrt{\Bigl( 1+{ f }^{ 2 }\left( s\right)\Bigl)}\cos\left[\sec (\theta)  \arctan\left[f(s)\right]  \right], \\
&&v_3(s)=0.
\end{matrix} \end{matrix}
\end{equation*}
We can show $v$ is a solution for (8). Therefore, we can write  $n=(n_1,n_2,n_3)$ as follows,

\begin{equation}
\begin{matrix} \begin{matrix}
&&n_1(s)=\int\kappa(s) v_1(s)ds=A_1 \left| c_{ 1 } \right|\sin(\theta) \cos\left[\sec (\theta)  \arctan\left[f(s)\right]  \right],\\
&&n_2(s)=\int\kappa(s) v_2(s)ds=A_2 \left| c_{ 1 } \right|\sin (\theta) \sin\left[\sec (\theta) \arctan\left[f(s)\right]  \right],\\
&&n_3(s)=\cos (\theta).
\end{matrix} \end{matrix}
\end{equation}

On the other hand, Let $\alpha$ be a unit speed rectifying slant helix, whose principal normal vector field makes a constant angle $\theta$ with $e_3$. Then, for its principal normal we can write
\begin{equation*}
<n,e_3>=\cos (\theta).
\end{equation*}
While $n=(n_1,n_2,n_3)$ is a unit vector, $n_1^2+n_2^2+n_3^2=1$. So, $n_1^2+n_2^2=1-\cos^2(\theta)=\sin^2 (\theta)$. Therefore $n$ can be in the form,

\begin{equation}
\begin{matrix}
\begin{matrix}
 &&n_1(s)=\sin (\theta) \cos(h(s))\\
 &&n_2(s)=\sin (\theta) \sin(h(s))\\
 &&n_3(s)=\cos(\theta),
\end{matrix}
\end{matrix}
\end{equation}
where $h(s)$ is a differentiable function.

If we take, $A_1=1/\left| c_{ 1 } \right|, A_2=1/\left| c_{ 1 } \right|, h(s)=\sec (\theta) \arctan\left[f(s)\right]$ at (9), (9) and (10) coincides. Thus, a unit speed rectifying slant helix $\alpha$ can be in the form;

\begin{equation*}
\begin{matrix}
\begin{matrix}
 &&\alpha_1(s)=\sin (\theta) \int \left(\int \kappa(s) \cos\left[\sec (\theta) \arctan\left(c_ 1 s+c_2\right)  \right] ds \right)ds,\\
 &&\alpha_2(s)=\sin (\theta)\int \left(\int \kappa(s) \sin\left[\sec (\theta) \arctan\left(c_ 1 s+c_2\right)  \right]ds \right)ds,\\
 &&\alpha_3(s)=\int \left(\int \kappa(s)\cos(\theta) ds \right) ds,
\end{matrix}
\end{matrix}
\end{equation*}
where $\alpha=(\alpha_1,\alpha_2,\alpha_3)$.

Therefore, we  find  $\alpha$ as follows.

\begin{equation*}
\begin{matrix}
\begin{matrix}
\alpha _{ 1 }(s)=-\frac { \cos {(\theta)  }  }{ c_{ 1 } } \sqrt { 1+\left( c_{ 1 }s+c_{ 2 } \right) ^{ 2 } } \cos { \left[ \sec { (\theta)  } \arctan { \left( c_{ 1 }s+c_{ 2 } \right)  }  \right]  }, \\
\alpha _{ 2 }(s)=-\frac { \cos { (\theta)   }  }{ c_{ 1 } } \sqrt { 1+\left( c_{ 1 }s+c_{ 2 } \right) ^{ 2 } } \sin { \left[ \sec { (\theta ) } \arctan { \left( c_{ 1 }s+c_{ 2 } \right)  }  \right]  }, \\
\alpha _{ 3 }(s)=\frac { 1 }{ c_{ 1 } } \sqrt { 1+\left( c_{ 1 }s+c_{ 2 } \right) ^{ 2 } } \sin { (\theta ). }
\end{matrix}
\end{matrix}
\end{equation*}

Now, we can write a new lemma;

\begin{lemma}
	Let $ \alpha(s) : I \longrightarrow  R^3$ be a space curve with the equation below,
	\begin{equation}
    \begin{split}
    \alpha(s)=-\frac { \sqrt { 1+\left( c_{ 1 }s+c_{ 2 } \right) ^{ 2 }}}{ c_{ 1 } }(&\cos(\theta) \cos \left[\sec { (\theta)  } \arctan { \left( c_{ 1 }s+c_{ 2 } \right)  }  \right],\\
    &\cos(\theta) \sin \left[\sec { (\theta)  } \arctan { \left( c_{ 1 }s+c_{ 2 } \right)  }  \right],\\
    &-\sin(\theta )),
    \end{split}
    \end{equation}
where $\theta\neq \frac { \pi  }{ 2 } +k\pi, k\in Z $, and $c_1\neq0,c_2\in R.$ Then, $\alpha(s)$ is a unit speed rectifying slant helix which lies on the cone
    \begin{equation}
    \tan^2(\theta)\left(x^2+y^2\right)=z^2.
    \end{equation} 
\end{lemma}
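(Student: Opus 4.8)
The plan is to treat the lemma as a verification: substitute the explicit curve (11) and check, in turn, that it lies on the cone, that it is unit speed, and that its curvature and torsion are exactly those of a rectifying slant helix. Throughout I would abbreviate $f(s)=c_1 s+c_2$, $R(s)=\sqrt{1+f^2}$ and $\phi(s)=\sec(\theta)\arctan(f(s))$, and record once and for all the elementary derivative rules $R'=c_1 f/R$ and $\phi'=c_1\sec(\theta)/R^2$, together with the identity $R^2-f^2=1$, since every subsequent simplification rests on them.

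The cone membership is immediate. Writing $\alpha=(x,y,z)$ from (11), one has $x^2+y^2=\frac{R^2}{c_1^2}\cos^2(\theta)(\cos^2\phi+\sin^2\phi)=\frac{R^2}{c_1^2}\cos^2(\theta)$ while $z^2=\frac{R^2}{c_1^2}\sin^2(\theta)$, so $\tan^2(\theta)(x^2+y^2)=z^2$, which is (12). This needs no differentiation and can be dispatched in one line.

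Next I would differentiate once to obtain $t=\alpha'$. Using the rules above, each component collapses once the factor $1+f^2$ in the denominators cancels against $R^2$; the expected outcome is a tangent whose squared length is $\frac{1}{R^2}(1+f^2\cos^2(\theta))+\frac{f^2\sin^2(\theta)}{R^2}=\frac{1+f^2}{R^2}=1$, establishing unit speed. Differentiating a second time gives $t'$, and here the decisive algebraic events are the repeated use of $R^2-f^2=1$ and the collapse $\sec(\theta)-\cos(\theta)=\sin(\theta)\tan(\theta)$; I expect $t'$ to reduce to $\frac{c_1\sin(\theta)}{R^3}\bigl(\tan(\theta)\cos\phi,\,\tan(\theta)\sin\phi,\,1\bigr)$ up to an overall sign, whence $\kappa=|t'|=\frac{|c_1\tan(\theta)|}{(1+f^2)^{3/2}}$, matching Theorem 4. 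From $n=t'/\kappa$ one then reads off that its third component is the constant $\pm\cos(\theta)$, i.e. $\langle n,e_3\rangle$ is constant, so $\alpha$ is a slant helix by Definition 1 (equivalently Theorem 1).

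To finish I would compute the torsion, most cleanly via $b=t\times n$ and $\tau=-\langle b',n\rangle$ (or directly from the Frenet apparatus), and verify $\tau=\frac{|c_1\tan(\theta)|\,f}{(1+f^2)^{3/2}}$, so that $\tau/\kappa=f=c_1 s+c_2$ is linear with nonzero slope $c_1$. At this stage $\kappa$ and $\tau$ have exactly the form (1) with $c_3=|c_1\tan(\theta)|$, so the converse direction of Theorem 3 identifies $\alpha$ as a rectifying slant helix; alternatively, Theorem 2 supplies the rectifying property and Definition 1 the slant-helix property. The main obstacle is purely computational: carrying the two differentiations through without losing the cancellations that force the denominators to become powers of $(1+f^2)$ and that reduce the trigonometric factors to $\sin(\theta)\tan(\theta)$. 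Once those simplifications are trusted, every assertion of the lemma follows by direct comparison with the formulas already established in Theorems 3 and 4.
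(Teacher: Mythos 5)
Your proposal is correct and follows essentially the same route as the paper: a direct computational verification that $\alpha$ lies on the cone, is unit speed, and has $\kappa=\frac{|c_1\tan(\theta)|}{(1+f^2)^{3/2}}$ and $\tau/\kappa=c_1s+c_2$, concluding from the characterizations already established. The only cosmetic difference is that you certify the slant-helix property by checking that $\langle n,e_3\rangle=\pm\cos(\theta)$ is constant (Definition 1), whereas the paper instead computes the invariant $\sigma$ of Theorem 1 and finds it equal to $\cot(\theta)$.
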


\begin{proof}
		With direct calculations we have $g(\alpha',\alpha')=1$, $g(n,n)=1$, and the curvature functions of $\alpha$ as,
		\begin{equation*}
		\begin{matrix}
		\begin{matrix}
		\kappa(s)=\frac{ \left| c_{ 1 } \tan(\theta)   \right|  }{\left((c_1 s+c_2)^2+1\right)^{3/2}},\\
		\tau(s)=\frac{\left| c_{ 1 } \tan(\theta)   \right|  (c_1 s+c_2)}{\left((c_1 s+c_2)^2+1\right)^{3/2}}.
		\end{matrix}
		\end{matrix}
		\end{equation*}	
		with,
		\begin{equation*}
		\frac { { \kappa  }^{ 2 }(s) }{ { { \left( { \kappa  }^{ 2 }(s)+{ \tau  }^{ 2 }(s) \right)  } }^{ { 3 }/{ 2 } } } { \left( \frac { \tau(s)  }{ \kappa(s)  }  \right)  }^{ ' } = \cot(\theta),	
		\end{equation*}
		and
		\begin{equation*}
		\frac{\tau(s)}{\kappa(s)}= c_1 s+c_2.
		\end{equation*}
		So, $\alpha$ is a unit speed spacelike rectifying slant helix. We also have
		\begin{equation*}
		\tan ^{ 2 }{ (\theta ) } \left( { { \alpha  }_{ 1 } }^{ 2 }(s)+{ { \alpha  }_{ 2 } }^{ 2 }(s) \right) -{ { \alpha  }_{ 3 } }^{ 2 }(s)=0,
		\end{equation*}
		then, $\alpha$ lies on the cone above.
\end{proof}

\begin{example}
If we take $c_1=1, c_2=0$, and $\cos(\theta)=1/3$ then, $\tan(\theta)=2\sqrt{2}$. If we put these into (11) and (12), we  have the following equations;
\begin{equation*}
\begin{matrix}
\alpha(s)=\left(-\frac{1}{3} \sqrt{s^2+1} \cos \left(3 \arctan(s)\right),-\frac{1}{3} \sqrt{s^2+1} \sin \left(3 \arctan(s)\right),\frac{2\sqrt{2}}{3} \sqrt{s^2+1}\right),\\
\kappa(s)=\frac{2\sqrt{2} }{\left(s^2+1\right)^{3/2}},
\tau(s)=\frac{2\sqrt{2}s}{\left(s^2+1\right)^{3/2}},\\
8\left(x^2+y^2\right)=z^2.
\end{matrix}
\end{equation*}

\begin{figure}[h!]
	\centering
	\includegraphics[width=0.3\textwidth]{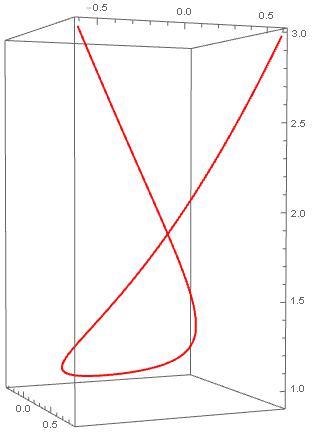}
    \includegraphics[width=0.3\textwidth]{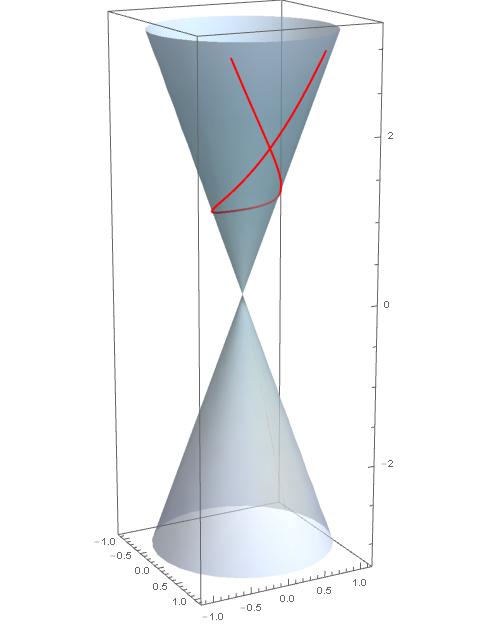}
    \includegraphics[width=0.3\textwidth]{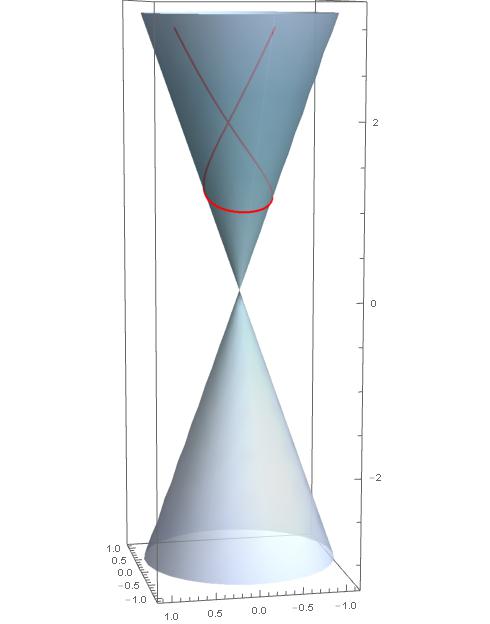}
	\caption{Rectifying Slant Helix on $8\left(x^2+y^2\right)=z^2$}
\end{figure}

\begin{figure}[h!]
	\centering
	\includegraphics[width=0.3\textwidth]{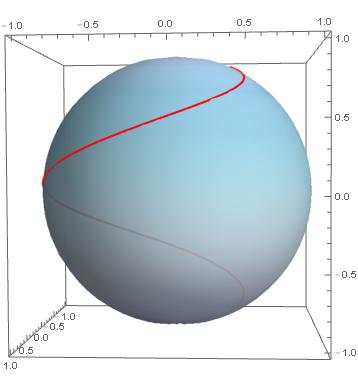}
	\includegraphics[width=0.3\textwidth]{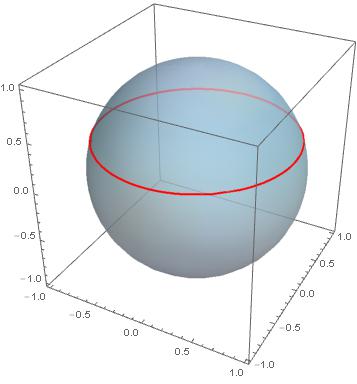}
	\includegraphics[width=0.3\textwidth]{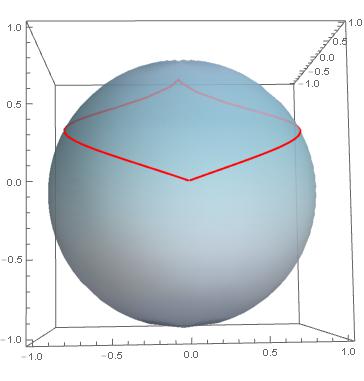}
	\caption{Tangent, Normal,and Binormal indicatrix of $\alpha$ resp.}
\end{figure}

\end{example}       	
\newpage
\begin{example}
	If we take $c_1=1/2, c_2=-1/5$, and $\cos(\theta)=1/10$ then, $\tan(\theta)=\sqrt{99}$. If we put these into (11) and (12), we  have the following equations;
	\begin{equation*}
	\begin{split}
	\beta(s)=\frac{1}{5} \sqrt{\left(\frac{s}{2}-\frac{1}{5}\right)^2+1}\Biggl(&-\cos \left(10 \arctan\left(\frac{s}{2}-\frac{1}{5}\right)\right),\\
	&-\sin \left(10 \arctan\left(\frac{s}{2}-\frac{1}{5}\right)\right),\\
	&\frac{3 \sqrt{11}}{5}\Biggl),
	\end{split}
	\end{equation*}
	\begin{equation*}
	\begin{matrix}
	
	\kappa(s)=\frac{1500 \sqrt{11}}{\Big(5 s (5 s-4)+104\Big)^{3/2}},
	\tau(s)=\frac{150 \sqrt{11} (5 s-2)}{\Big(5 s (5 s-4)+104\Big)^{3/2}},\\
	99\left(x^2+y^2\right)=z^2.
	\end{matrix}
	\end{equation*}

	\begin{figure}[h!]
		\centering
		\includegraphics[height=4.5cm,width=0.3\textwidth]{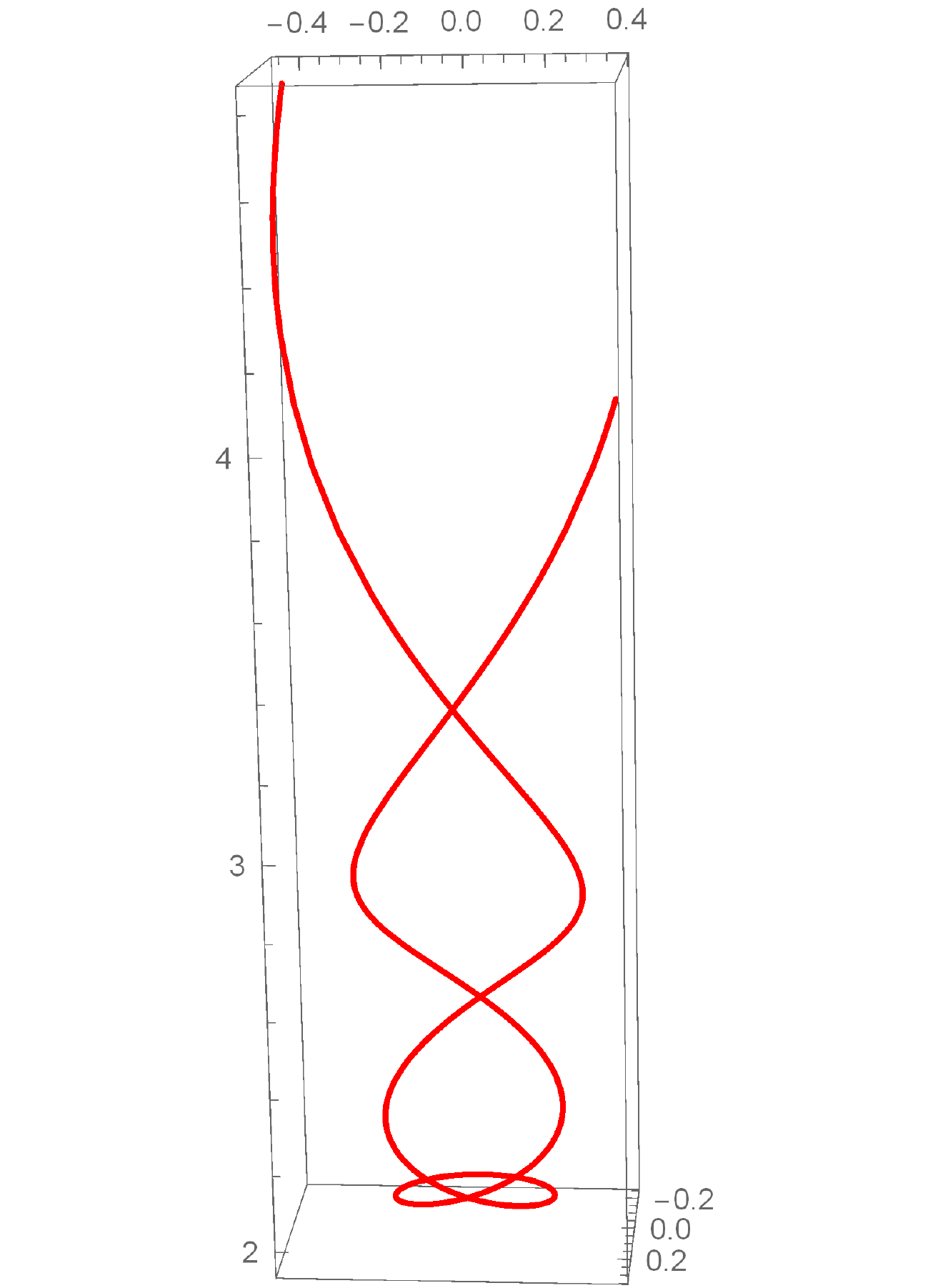}
		\includegraphics[height=4.5cm, width=0.3\textwidth]{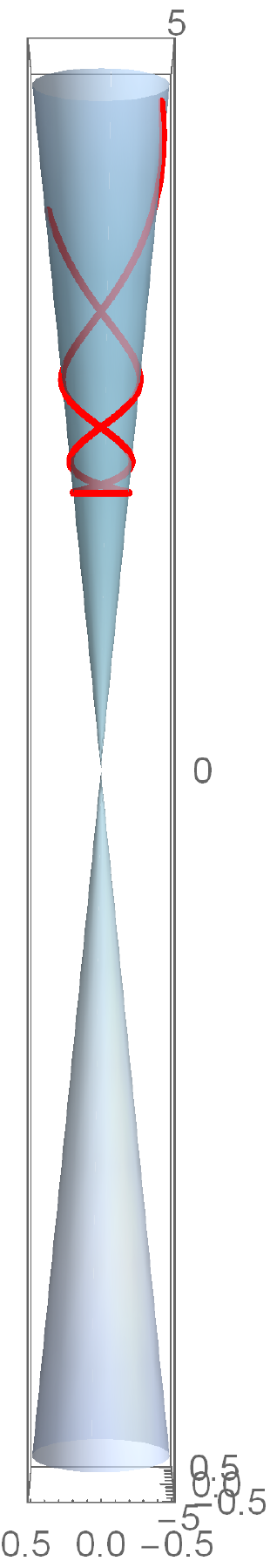}
		\caption{Rectifying Slant Helix on $99\left(x^2+y^2\right)=z^2$}
	\end{figure}
	
	\begin{figure}[h!]
		\centering
		\includegraphics[width=0.3\textwidth]{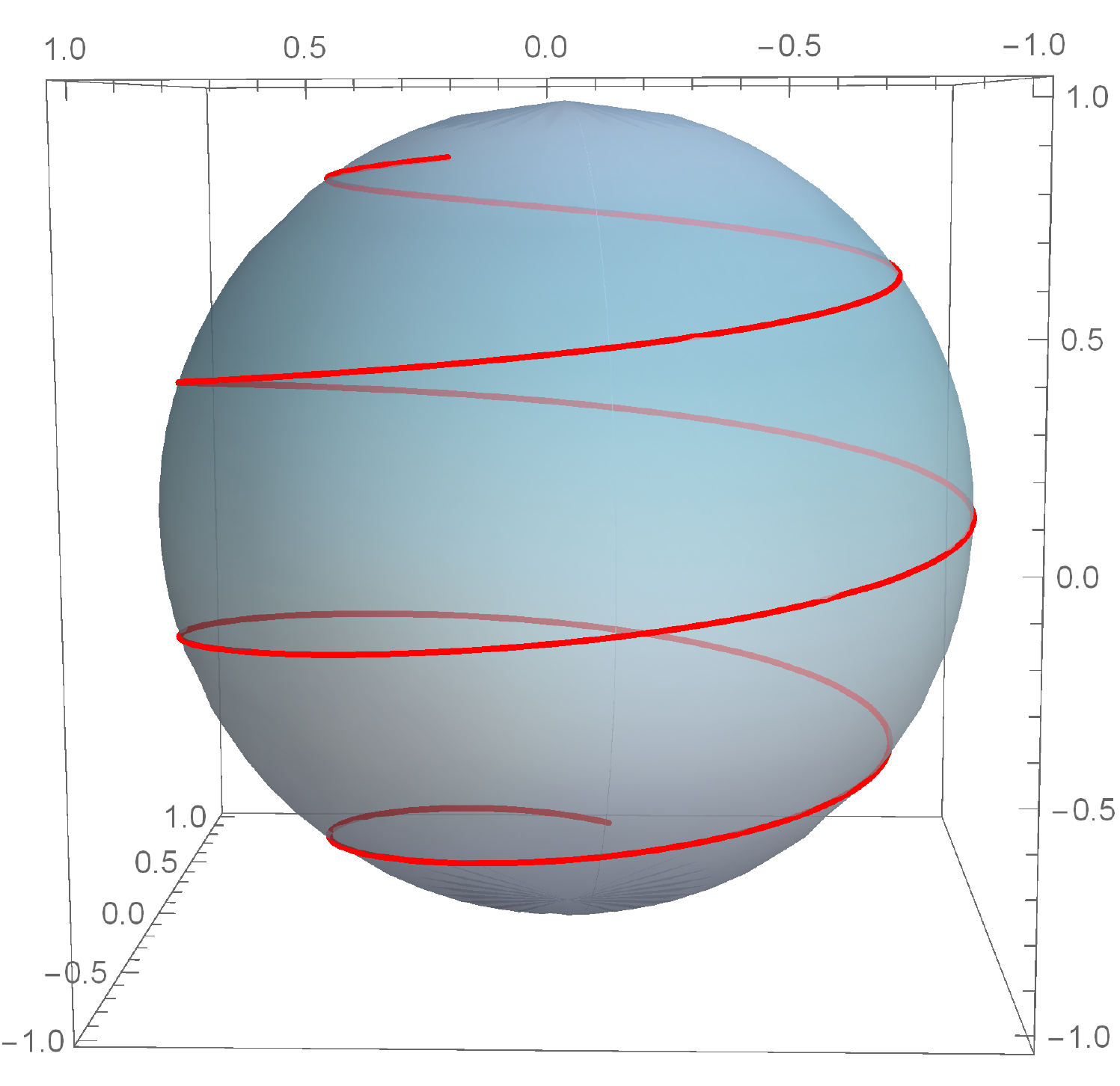}
		\includegraphics[width=0.3\textwidth]{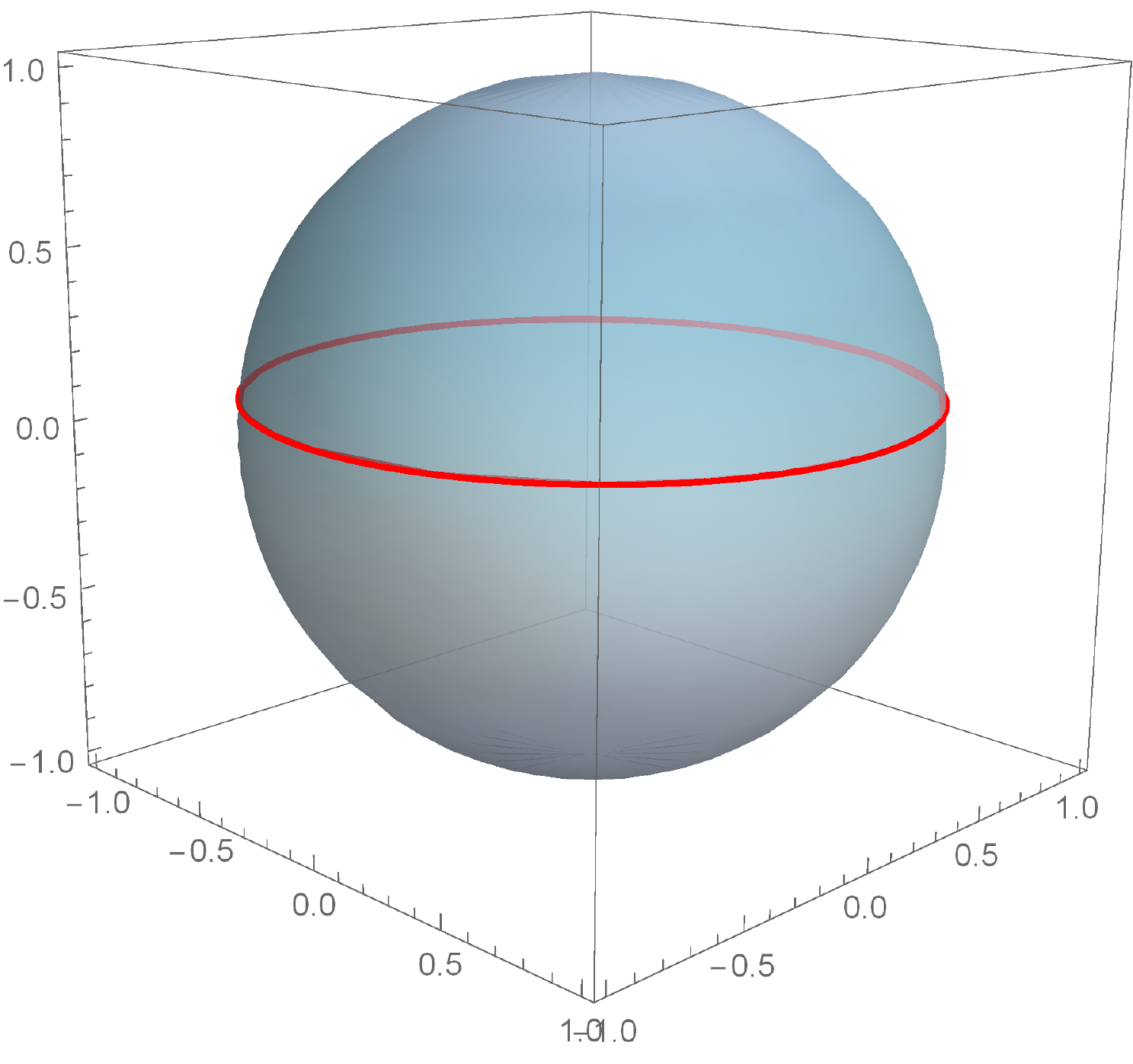}
		\includegraphics[width=0.3\textwidth]{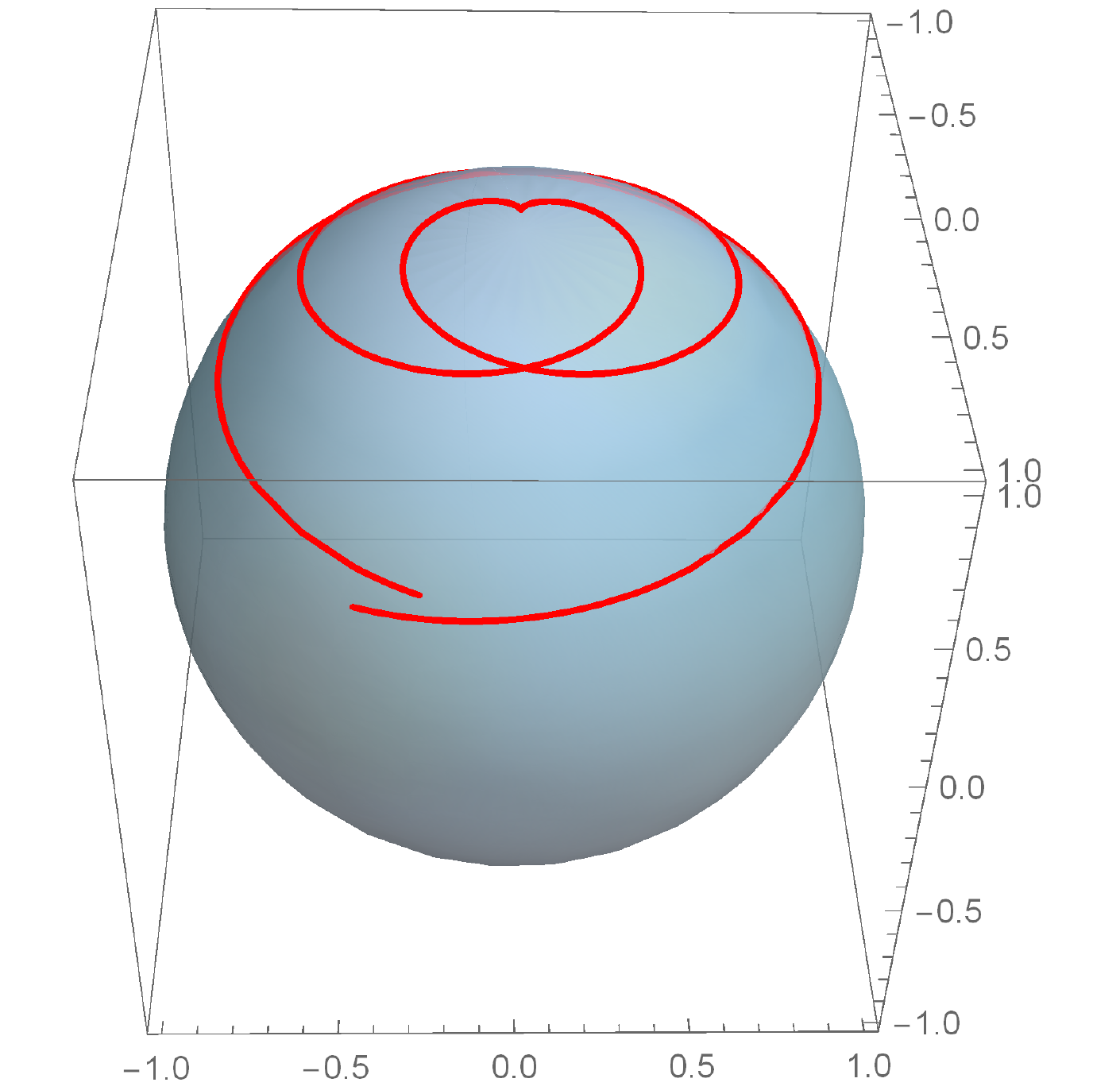}
		\caption{Tangent, Normal,and Binormal indicatrix of $\beta$ resp.}
	\end{figure}
	
\end{example}

\end{document}